\theoremstyle{plain}
\newtheorem{thm}{Theorem}
\newtheorem{prop}[thm]{Proposition}
\theoremstyle{plain}
\theoremstyle{remark}
\newtheorem{rem}[thm]{Remark}
\newcommand{\R}{\mathbb{R}}
\newcommand{\ep}{\varepsilon}
\newcommand{\rd}{\partial}
\title{Splitting of 3-Manifolds and \\
Rigidity of Area-Minimising Surfaces}
\author{Mario Micallef}
\address{Mathematics Institute, University of Warwick,
Coventry, CV4 7AL, U.K.}
\email{M.J.Micallef@warwick.ac.uk}
\author{Vlad Moraru}
\address{Mathematics Institute, University of Warwick,
Coventry, CV4 7AL, U.K.}
\email{vmoraru@gmail.com}
\begin{document}
\begin{abstract}
In this paper we modify an argument in \cite{BBN}
to prove an area comparison result (Theorem \ref{areadec})
for certain totally geodesic surfaces in 3-manifolds
with a lower bound on the scalar curvature. This theorem
is a variant of a comparison theorem (Theorem 3.2 (d) in
\cite{HK}) of Heintze-Karcher for minimal hypersurfaces in manifolds
of nonnegative Ricci curvature. Our assumptions on the ambient manifold
are weaker but the assumptions on the surface are
considerably more restrictive. We then use our comparison theorem
to provide a unified proof of various splitting theorems
for 3-manifolds with lower bounds on the scalar curvature
that were first proved in \cite{CG}, \cite{BBN} and \cite{N}.
\end{abstract}

\maketitle
\section{Introduction and statement of main results}
In \cite{Si}, Corollary 3.6.1, Simons observed that
there are no closed, stable, minimal, 2-sided hypersurfaces
in a manifold of positive Ricci curvature.
This is a variant of the classical Synge lemma.
An easy, but unstated, extension of Simons's observation is that
a closed, stable, minimal, 2-sided hypersurface $\Sigma$ in a manifold $M$
of nonnegative Ricci curvature is necessarily totally geodesic
and the normal Ricci curvature of $M$ must vanish all along $\Sigma$.
The simplest such example is $\Sigma \times (-\ep,\ep)$, $\ep > 0$,
with the product metric $g + dt^2$,
where the metric $g$ on $\Sigma$ has nonnegative Ricci curvature.
However, the existence of a closed, stable, minimal, 2-sided
hypersurface $\Sigma$ in a manifold $M$ of nonnegative Ricci curvature
does not imply that the metric of $M$ near $\Sigma$ must split as
$g + dt^2$. Indeed, the metric $(1-t^4)g + dt^2$ on
$\Sigma \times (-\ep,\ep)$, $1> \ep > 0$,
has nonnegative Ricci curvature if $g$ has nonnegative Ricci curvature
and $\Sigma \times \{0\}$ is stable.\footnote{If $g$ is the round metric
on the unit sphere $S^n$, then it is easy to embed this example,
for $\ep$ sufficiently small,
into a metric on $S^{n+1}$ of nonnegative sectional curvature.}
However note that in this example,
$\Sigma \times \{0\}$ does not minimise area, not even locally.

So, one might surmise whether the existence of a closed,
\emph{area-minimising}, 2-sided hypersurface $\Sigma$
in a manifold $M$ of nonnegative Ricci curvature
implies that the metric of $M$ near $\Sigma$ must split as
$g + dt^2$. This is indeed the case and it follows from
a special case of Theorem 3.2 (d) in \cite{HK},
where Heintze and Karcher prove that
the exponential map of the normal bundle
$\Sigma \times \R$ of $\Sigma$ in $M$ is volume non-increasing when
$M$ has nonnegative Ricci curvature,
$\Sigma$ is two-sided and minimal in $M$ and
$\Sigma \times \R$  is equipped with the product metric.
Anderson extended this result to area-minimising integral currents
of codimension 1 in a compact manifold of nonnegative Ricci curvature;
see Theorem 3 in \cite{An}. Anderson then used this theorem,
together with an existence result for area-minimising hypersurfaces,
to obtain a different proof of the splitting theorem in \cite{CHG}
of Cheeger and Gromoll in the compact case;
see Corollary 3 in \cite{An}.

It is easy to construct examples of totally geodesic,
area-minimising hypersurfaces in manifolds of positive scalar curvature.
Therefore, positive Ricci curvature in Simons's result
cannot be relaxed to positive scalar curvature.
Nevertheless, in the well-known paper \cite{SY}, Schoen and Yau
obtained topological restrictions on an oriented, two-sided,
stable, minimal surface $\Sigma$ in a 3-manifold $M$
whose scalar curvature $R$ is positive. In particular,
they proved that the genus of $\Sigma$ must be zero. Soon after,
Fischer-Colbrie and Schoen studied the case $R \geqslant 0$
and proved in \cite{FCS} that, in this case, the genus of $\Sigma$
must be zero or one, and if it is one,
then $\Sigma$ is totally geodesic and flat and both
the normal Ricci curvature of $M$ and $R$ vanish all along $\Sigma$.

A closer look at the proof of Schoen-Yau reveals that
a lower bound on the scalar curvature $R$ of the 3-manifold $M$
provides a bound on the area of the stable minimal surface $\Sigma$.
More precisely,
\begin{enumerate}
\item if $R \geqslant 2$
then the area of $\Sigma$ is bounded above by $4 \pi$ and
\item if $R \geqslant -2$ then the area of $\Sigma$
is bounded below by $4 \pi(\gamma-1)$
where $\gamma$ is the genus of $\Sigma$.
\end{enumerate}
To our knowledge, these bounds first appeared in \cite{SZ}.
Easy examples show that no area bounds are possible
for stable minimal tori in flat three-dimensional tori.
If the area bound in (1) or (2) is attained and
$R_0 := \min_{x \in M}R(x)$, then
an analysis similar to that used by Fischer-Colbrie and Schoen
in the case of genus one yields that
\begin{enumerate}
\item[(i)] the surface is totally geodesic,
\item[(ii)] $R$ is equal to $R_0$ all along $\Sigma$ and
\item[(iii)] the normal Ricci curvature of $M$
vanishes all along $\Sigma$.
\end{enumerate}
It follows that
\begin{enumerate}
\item[(iv)] the Gauss curvature $K$ of $\Sigma$
is identically equal to $\tfrac12R_0$.
\end{enumerate}

The obvious examples in which the area bound (1) or (2) is attained
are provided by $\Sigma \times (-\ep,\ep)$, $\ep > 0$,
with the product metric $g + dt^2$,
where $g$ has constant Gauss curvature equal to $\frac12 R_0$.
A natural question is whether these are the only examples in which
equality is attained in (1) and (2). As in the case of
nonnegative Ricci curvature mentioned earlier, this does not hold.
Indeed, the scalar curvature $R$ of the metric
$(1-t^4)g + dt^2$ satisfies $R \geqslant R_0$ and
$\Sigma \times \{0\}$ is stable but, once again,
does not minimise area, not even locally.

So again, one surmises whether the metric near
an \emph{area-minimising} closed surface $\Sigma$
which satisfies (i), (ii) and (iii), in a 3-manifold $M$
whose scalar curvature $R$ is greater than or equal to $R_0$, splits.
The examples of totally geodesic, area-minimising hypersurfaces
in manifolds of positive scalar curvature that come easily to mind
do not satisfy condition (ii).

To make use of the area-minimising property,
one has to perturb $\Sigma$ in a way which decreases its area.
When the Ricci curvature of $M$ is nonnegative,
Heintze and Karcher (and, to second order, Simons)
showed that this is achieved by surfaces that are
equidistant from $\Sigma$. However, in our case
we do not have information on the Ricci tensor away from $\Sigma$
and so, this is not a suitable perturbation. We shall see that
the right thing to do is to move $\Sigma$ so that
it still has constant mean curvature. This turns out to be possible by
Proposition 3.2 in \cite{BBN} which essentially asserts the existence of
a one-parameter family of constant mean curvature surfaces
in a neighborhood of a surface $\Sigma$
which satisfies (i) and (iii); see also \cite{ACG}.
A detailed proof of the following statement,
based on the implicit function theorem, can be found in \cite{N}.

% CMC foliation --------------------------------------------------

\begin{prop}\label{cmc}
  Let $\nu$ be a unit normal field on an oriented, two-sided surface
  $\Sigma$ immersed in a three-manifold $M$.
  If $\Sigma$ satisfies (i) and (iii),
  then there exists $\ep > 0$ and a smooth function
  $w \colon \Sigma\times(-\ep,\ep) \rightarrow \R$
  such that, for all $t \in(-\ep,\ep)$ the surfaces
  \begin{equation}\nonumber
    \Sigma_t:=\{\exp_x(w(x,t)\nu(x)) : x \in \Sigma \}
  \end{equation}
  have constant mean curvature $H(t)$. Moreover we have
  \begin{equation}\nonumber
    w(x,0)=0, \quad
    \left.\frac{\rd}{\rd t}w(x,t)\right|_{t=0}=1,
    \quad \text{and} \quad
    \int_{\Sigma} (w(\cdot,t) - t) \, dA = 0,
  \end{equation}
  for all $x \in \Sigma$ and $t \in(-\ep,\ep)$.
\end{prop}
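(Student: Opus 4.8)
The plan is to set this up as an application of the implicit function theorem in Banach spaces. First I would fix the normal exponential map and, for a function $u \colon \Sigma \to \R$ of sufficiently small $C^{2,\alpha}$-norm, write $\Sigma_u := \{\exp_x(u(x)\nu(x)) : x \in \Sigma\}$; this is an embedded surface for $u$ small, and its mean curvature (with respect to the natural extension of $\nu$) is given by a quasilinear second-order elliptic operator $\mathcal{H}(u)$ depending smoothly on $u$. Since $\Sigma$ is totally geodesic by (i), we have $\mathcal{H}(0) = 0$, and the linearisation of $\mathcal{H}$ at $u = 0$ is the Jacobi operator $L = \Delta_\Sigma + (|A|^2 + \mathrm{Ric}(\nu,\nu))$; by (i) we have $|A|^2 = 0$ and by (iii) $\mathrm{Ric}(\nu,\nu) = 0$ all along $\Sigma$, so in fact $L = \Delta_\Sigma$, the Laplacian of $(\Sigma, g|_\Sigma)$.

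Next I would encode the two normalisations — constancy of the mean curvature and the integral constraint $\int_\Sigma (u - t)\, dA = 0$ — into a single map between Banach spaces whose invertibility at $0$ is governed by $\Delta_\Sigma$. Concretely, let $X = \{u \in C^{2,\alpha}(\Sigma) : \int_\Sigma u \, dA = 0\}$ and $Y = \{f \in C^{0,\alpha}(\Sigma) : \int_\Sigma f\, dA = 0\}$, and consider
\begin{equation}\nonumber
  F \colon \R \times X \to Y, \qquad
  F(t, v) := \mathcal{H}(t + v) - \frac{1}{|\Sigma|}\int_\Sigma \mathcal{H}(t+v)\, dA,
\end{equation}
so that $F(t,v) = 0$ says precisely that $\Sigma_{t+v}$ has constant mean curvature. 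We have $F(0,0) = 0$, and the partial derivative $D_v F(0,0) \colon X \to Y$ is $v \mapsto \Delta_\Sigma v$ (the projection term drops because $\int_\Sigma \Delta_\Sigma v\, dA = 0$). Since $\Sigma$ is closed, $\Delta_\Sigma \colon X \to Y$ is an isomorphism by standard elliptic theory (the kernel consists of constants, which are excluded by the mean-zero condition, and the image is exactly the mean-zero functions). The implicit function theorem then yields $\ep > 0$ and a smooth curve $t \mapsto v(t) \in X$ with $v(0) = 0$ and $F(t, v(t)) = 0$; setting $w(x,t) := t + v(t)(x)$ gives the desired family $\Sigma_t$ with constant mean curvature $H(t) := \tfrac{1}{|\Sigma|}\int_\Sigma \mathcal{H}(w(\cdot,t))\, dA$, and $\int_\Sigma (w(\cdot,t) - t)\, dA = \int_\Sigma v(t)\, dA = 0$ by construction. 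For the derivative at $t=0$, differentiate $F(t, v(t)) = 0$ at $t = 0$ to get $\Delta_\Sigma \big(1 + \dot v(0)\big) = 0$ (the $\partial_t(t)$ contribution through $\mathcal{H}$ again linearises to $\Delta_\Sigma$ applied to the constant $1$, which vanishes); since $\dot v(0) \in X$ has mean zero and $1 + \dot v(0)$ is annihilated by $\Delta_\Sigma$ hence constant, we conclude $\dot v(0) = 0$, so $\partial_t w(x,t)|_{t=0} = 1$.

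The main obstacle is not any single step but rather the bookkeeping needed to make the scheme honest: one must choose the regularity class (Hölder or Sobolev) consistently, verify that $u \mapsto \mathcal{H}(u)$ is a smooth map between the chosen spaces on a neighbourhood of $0$ (this is where the smoothness of $\exp$ and of the metric enter), and check carefully that the zeroth-order term in the Jacobi operator genuinely vanishes — this is exactly the point at which hypotheses (i) and (iii) are used, and it is what makes the linearised operator the bare Laplacian $\Delta_\Sigma$ rather than a general Schrödinger operator that might fail to be invertible. Once the linearisation is identified as $\Delta_\Sigma$ on mean-zero functions, invertibility is immediate and the rest follows mechanically; since a detailed treatment along these lines is already available in \cite{N} (and the infinitesimal version in \cite{BBN}, see also \cite{ACG}), I would cite those for the routine verifications and emphasise only the identification of the linearisation and the extraction of the normalisations $w(x,0) = 0$, $\partial_t w(x,t)|_{t=0} = 1$, and $\int_\Sigma(w(\cdot,t)-t)\,dA = 0$.
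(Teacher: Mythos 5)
Your proposal is correct and follows exactly the route the paper itself indicates: the paper does not prove Proposition \ref{cmc} but defers to \cite{N} (and \cite{BBN}, \cite{ACG}) for ``a detailed proof \ldots based on the implicit function theorem,'' which is precisely your scheme --- hypotheses (i) and (iii) reduce the linearised mean-curvature operator to $\Delta_\Sigma$, which is invertible between the mean-zero H\"older spaces, and the normalisations $w(x,0)=0$, $\partial_t w|_{t=0}=1$, $\int_\Sigma(w(\cdot,t)-t)\,dA=0$ drop out as you describe. No gaps beyond the routine regularity bookkeeping you already flag.
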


% Area decrease --------------------------------------------------

We shall prove the following theorem which,
in light of the preceding discussion,
is the optimal analogue of the
Heintze-Karcher Theorem 3.2(d) in \cite{HK} in the context of
3-manifolds with lower bounds on scalar curvature.
The proof relies heavily on the Gauss-Bonnet Theorem and so,
it is not at all clear how
it may be generalised to dimension 4 or higher.
\begin{thm}\label{areadec}
Let $M$ be a three-manifold and let $\Sigma \subset M$ be an immersed,
2-sided, closed, surface. Denote by $R$ the scalar curvature of $M$,
and let $R_0 := \min_{x \in M}R(x)$. Suppose that $\Sigma$
has the following properties.
\begin{enumerate}
\item[(i)] $\Sigma$ is totally geodesic,
\item[(ii)] $R$ is equal to $R_0$ all along $\Sigma$ and
\item[(iii)] the normal Ricci curvature of $M$
 vanishes all along $\Sigma$.
\end{enumerate}
Let $\Sigma_t$ and $\ep$ be as in Proposition \ref{cmc} and
denote by $A(t)$ the area of $\Sigma_t$. Then
there exists $0 < \delta < \ep$ such that
\[
\text{for }|t| < \delta, \quad
A(t) \leqslant A(0) = \text{ area of $\Sigma$.}
\]
$\Sigma$ has constant Gauss curvature equal to $\frac12 R_0$
and therefore, by Gauss-Bonnet, $A(0) = \frac{8 \pi}{R_0} |\gamma - 1|$
if $R_0$ is nonzero.
\end{thm}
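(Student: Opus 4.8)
The last sentence is quick, so I would dispatch it first. Since $\Sigma$ is totally geodesic, the Gauss equation along $\Sigma$ reads $K_\Sigma=\tfrac12R-\mathrm{Ric}(\nu,\nu)$, which by (ii) and (iii) equals $\tfrac12R_0$; Gauss–Bonnet then gives $\tfrac12R_0\,A(0)=2\pi\chi(\Sigma)$, whence $A(0)=4\pi\chi(\Sigma)/R_0$, i.e. the asserted value (with $\chi(\Sigma)=2-2\gamma$), when $R_0\neq0$. For the comparison $A(t)\le A(0)$ the plan is to study $A(t)$ along the foliation. Write the leaves as $F(\cdot,t)$ and let $\rho_t$ denote the normal component of $\partial_tF$ along $\Sigma_t$ (the lapse). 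By Proposition~\ref{cmc}, $\partial_tF|_{t=0}=\nu$, so $\rho_0\equiv1$, hence $\rho_t>0$ for $|t|$ small, and the first variation of area gives $A'(t)=-H(t)V(t)$ with $V(t):=\int_{\Sigma_t}\rho_t\,dA_t$, $V(0)=A(0)>0$; in particular $A'(0)=0$ since $\Sigma$ is minimal.

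The heart of the matter is a differential inequality for $H(t)$. Along the foliation $H'(t)=\Delta_{\Sigma_t}\rho_t+(|A_t|^2+\mathrm{Ric}(\nu_t,\nu_t))\rho_t$ (the left side being the constant $H'(t)$). Dividing by $\rho_t$ and integrating over $\Sigma_t$, and using $\int_{\Sigma_t}\rho_t^{-1}\Delta_{\Sigma_t}\rho_t\,dA_t=\int_{\Sigma_t}\rho_t^{-2}|\nabla\rho_t|^2\,dA_t\ge0$, one gets $H'(t)\int_{\Sigma_t}\rho_t^{-1}\,dA_t\ge\int_{\Sigma_t}(|A_t|^2+\mathrm{Ric}(\nu_t,\nu_t))\,dA_t$. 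Now I would substitute the Gauss-equation identity $|A_t|^2+\mathrm{Ric}(\nu_t,\nu_t)=\tfrac12|A_t|^2+\tfrac12R+\tfrac12H(t)^2-K_{\Sigma_t}$, use $R\ge R_0$, and invoke Gauss–Bonnet together with $\chi(\Sigma_t)=\chi(\Sigma)$ and the identity $2\pi\chi(\Sigma)=\tfrac12R_0A(0)$ just obtained. Discarding the nonnegative terms $\tfrac12\int_{\Sigma_t}|A_t|^2$ and $\int_{\Sigma_t}\rho_t^{-2}|\nabla\rho_t|^2$, this yields, for $|t|$ small,
\[
H'(t)\,W(t)\;\ge\;\tfrac12H(t)^2A(t)+\tfrac12R_0\bigl(A(t)-A(0)\bigr),\qquad W(t):=\int_{\Sigma_t}\rho_t^{-1}\,dA_t>0 .
\]

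To conclude I would set $B(t):=A(0)-A(t)$, so $B(0)=0$, $B'(0)=-A'(0)=0$ and $H(t)=-A'(t)/V(t)=B'(t)/V(t)$. Inserting $H'(t)=B''(t)/V(t)-B'(t)V'(t)/V(t)^2$ into the displayed inequality, multiplying by $V(t)/W(t)>0$, and discarding the nonnegative term arising from $\tfrac12H^2A$, one obtains
\[
B''(t)-\frac{V'(t)}{V(t)}\,B'(t)+\frac{R_0\,V(t)}{2W(t)}\,B(t)\;\ge\;0,\qquad B(0)=B'(0)=0 ,
\]
a second-order linear differential inequality $B''+pB'+qB\ge0$ with continuous coefficients near $0$ (here $A$, and hence $V,W,B$, are smooth in $t$ and $V,W>0$). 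Writing $B$ by variation of parameters, $B(t)=\int_0^t g_s(t)\,(B''+pB'+qB)(s)\,ds$, where $g_s$ solves the homogeneous equation with $g_s(s)=0$, $g_s'(s)=1$, so that $\operatorname{sgn}g_s(t)=\operatorname{sgn}(t-s)$ for $(s,t)$ near $(0,0)$ with $s\neq t$, the nonnegativity of the integrand forces $B(t)\ge0$, i.e. $A(t)\le A(0)$, for $|t|<\delta$ with $\delta$ small.

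The main obstacle is precisely this last step. Because $\Sigma$ is totally geodesic with vanishing normal Ricci curvature, $A'(0)=A''(0)=0$, so the desired inequality is not infinitesimal and cannot be read off from the second variation; it is the differential inequality above, together with $B(0)=B'(0)=0$, that pins down the sign of $B$ near $0$. (Equivalently, the same inequality gives $H(0)=H'(0)=H''(0)=0$ and $H'''(0)\ge0$, so $t\mapsto A(t)$ has a local maximum at $0$.) The remaining care is in getting the evolution equation for $H(t)$ and its combination with Gauss–Bonnet right, and in keeping $\rho_t>0$ so that the division by $\rho_t$ is legitimate.
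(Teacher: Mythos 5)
Your proposal is correct, and while the first half coincides with the paper's argument, the way you close it is genuinely different. Up to the key inequality you follow the same route as the paper: the Jacobi equation for the lapse along the CMC foliation, division by $\rho_t>0$, the Gauss equation, integration by parts and Gauss--Bonnet, yielding (in your sign convention) $H'(t)W(t)\geqslant \tfrac12H(t)^2A(t)+\tfrac12R_0\bigl(A(t)-A(0)\bigr)$; this is exactly the paper's inequality \eqref{key-inequality} once one uses $4\pi(1-\gamma)=K_0A(0)$. The divergence is in the endgame. The paper converts $A(0)-A(t)$ into $-\int_0^tH(s)\xi(s)\,ds$ via the first variation formula, splits into three cases according to the sign of $R_0$, and runs a contradiction argument (mean value theorem plus the a priori bounds \eqref{bounds}) to prove the intermediate statement $H(t)\leqslant 0$ on $[0,\delta)$, from which $A'\leqslant 0$ follows. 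You instead package everything into the single second-order differential inequality $B''+pB'+qB\geqslant 0$ for $B=A(0)-A(t)$ with $B(0)=B'(0)=0$, and conclude $B\geqslant 0$ near $0$ from the variation-of-parameters representation $B(t)=\int_0^tg_s(t)h(s)\,ds$ and the sign of the Green's kernel $g_s(t)$ on a short interval (which holds uniformly in $s$ since the coefficients $p=-V'/V$ and $q=R_0V/(2W)$ are bounded near $0$; this is the one step you assert rather than prove, but it is a standard Gronwall/Sturm fact). This buys a uniform treatment of all three signs of $R_0$ and dispenses with the paper's case analysis and its two nested contradiction claims; what it gives up is the explicit intermediate conclusion $H(t)\leqslant 0$ for $t\geqslant 0$ (and $\geqslant 0$ for $t\leqslant 0$), which is not needed later. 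Two cosmetic cautions: your sign conventions for $H$ and the Jacobi equation are the opposite of the paper's (internally consistent, so harmless), and your parenthetical claim that $H'''(0)\geqslant 0$ forces a local maximum of $A$ at $0$ is only heuristic (it is inconclusive if $H'''(0)=0$) --- but the Green's function argument does not rely on it.
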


% Splitting theorem ----------------------------------------------
Theorem \ref{areadec} is the key ingredient that we use to provide 
a unified and more elementary proof of the following splitting and rigidity theorem, 
the three cases of which were separately proved, 
using different techniques, in \cite{BBN}, \cite{CG} and \cite{N}. 

\begin{thm} \label{mainthm}
Let $(M,g)$ be a complete Riemannian three-manifold
with scalar curvature $R$ and let $R_0 := \min_{x \in M}R(x)$.
Assume that $M$ contains a closed, embedded,
oriented, two-sided, area minimizing surface $\Sigma$.
\begin{enumerate}
\item Suppose that $R_0 = 2$ and that the area of
$\Sigma$ is equal to $4 \pi$. Then $\Sigma$ has genus zero and
it has a neighbourhood which is isometric to the product
$g_1 + dt^2$ on $S^2 \times (-\delta,\delta)$ where $g_1$ is the metric
on the Euclidean two-sphere of radius 1.
\item Suppose that $R_0 = 0$ and that $\Sigma$ has genus one.
Then $\Sigma$ has a neighbourhood which is flat and
isometric to the product $g_0 + dt^2$ on $T^2 \times (-\delta,\delta)$
where $g_0$ is a flat metric on the torus $T^2$.
\item Suppose that $R_0 = -2$ and
that $\Sigma$ has genus $\gamma \geqslant 2$ and
area equal to $4 \pi (\gamma - 1)$.
Then $\Sigma$ has a neighbourhood which is isometric to the product
$g_{-1} + dt^2$ on $\Sigma \times (-\delta,\delta)$ where $g_{-1}$ is
a metric of constant Gauss curvature equal to $-1$ on $\Sigma$.
\end{enumerate}
\end{thm}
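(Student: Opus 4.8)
The plan is to deduce Theorem~\ref{mainthm} from Theorem~\ref{areadec} by first establishing that in each of the three cases the hypotheses (i), (ii) and (iii) of Theorem~\ref{areadec} are satisfied, and then using the area-minimising property to force the area function $A(t)$ of the constant mean curvature family $\Sigma_t$ to be constant near $t=0$, which in turn will be shown to force the metric to split. The first step is standard second-variation analysis in the spirit of Schoen-Yau and Fischer-Colbrie-Schoen: since $\Sigma$ minimises area it is stable, so the stability inequality applied with test function $1$, combined with the Gauss equation $2K_\Sigma = R - 2\,\mathrm{Ric}(\nu,\nu) - |A_\Sigma|^2$ and the Gauss-Bonnet theorem, yields in case (1) that $\mathrm{area}(\Sigma)\le 4\pi/R_0\cdot(\text{something})$ and forces equality to propagate; more precisely, equality in the area bound (as in items (i)--(iv) of the Introduction) gives that $\Sigma$ is totally geodesic, $R\equiv R_0$ on $\Sigma$, $\mathrm{Ric}(\nu,\nu)\equiv 0$ on $\Sigma$, and $K_\Sigma\equiv \tfrac12 R_0$. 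The same argument handles case (3) with $\gamma\ge 2$ and area $4\pi(\gamma-1)$, and case (2) is exactly the Fischer-Colbrie-Schoen rigidity for the genus-one case with $R_0=0$. In all three cases we conclude that $\Sigma$ satisfies (i), (ii), (iii), so Proposition~\ref{cmc} and Theorem~\ref{areadec} apply.

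The second step is the rigidity argument. By Theorem~\ref{areadec} there is $\delta>0$ with $A(t)\le A(0)$ for $|t|<\delta$. On the other hand $\Sigma=\Sigma_0$ is area-minimising, so every nearby surface — in particular each $\Sigma_t$, which is a small graph over $\Sigma$ — has area at least $A(0)$. Hence $A(t)=A(0)$ for all $|t|<\delta$: the area function is constant. The plan now is to extract from the equality case of the proof of Theorem~\ref{areadec} that this constancy forces every $\Sigma_t$ to again be totally geodesic with $K_{\Sigma_t}\equiv\tfrac12 R_0$, $R\equiv R_0$ and $\mathrm{Ric}(\nu_t,\nu_t)\equiv 0$ along $\Sigma_t$; equivalently, the first and second variation of area vanish identically along the family, so $H(t)\equiv 0$ and the $\Sigma_t$ are a foliation by totally geodesic minimal surfaces. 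Writing the metric in Fermi-type coordinates adapted to this foliation, $g = \varphi(x,t)^2\,dt^2 + g_t$ where $g_t$ is the induced metric on $\Sigma_t$, the vanishing of the second fundamental forms gives $\partial_t g_t = 0$ after reparametrising $t$ by arclength, so $g_t\equiv g_0$; and then a short computation with the Riccati/Jacobi equation, using $\mathrm{Ric}(\partial_t,\partial_t)\equiv 0$ and the scalar curvature constraint, forces $\varphi$ to be constant, giving the product metric $g_0 + dt^2$. Finally Gauss-Bonnet identifies $g_0$ as the constant-curvature metric of curvature $\tfrac12 R_0$ (round of radius $1$ on $S^2$ in case (1), flat on $T^2$ in case (2), hyperbolic curvature $-1$ on $\Sigma$ in case (3)), and the topological conclusion in case (1) that $\gamma=0$ follows since a genus-$\ge1$ surface cannot carry a metric of constant positive curvature.

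The main obstacle — and the place where one must be careful rather than merely invoke Theorem~\ref{areadec} as a black box — is upgrading the \emph{non-strict} inequality $A(t)\le A(0)$ to the genuine infinitesimal rigidity of the whole foliation. Theorem~\ref{areadec} as stated only asserts $A(t)\le A(0)$; one needs to revisit its proof to see that when $A(t)$ is identically equal to $A(0)$, every inequality used along the way (the pointwise use of the constraint $R\ge R_0$, the Cauchy-Schwarz/Gauss-Bonnet estimates, and the sign of $A''(0)$) must be an equality for all $t$, not just at $t=0$. Granting that, the passage to the splitting is the routine Fermi-coordinate computation sketched above. A secondary subtlety is purely local-versus-global: Theorem~\ref{mainthm} only claims a splitting on a \emph{neighbourhood} of $\Sigma$, so no completeness or global minimisation beyond a fixed tubular neighbourhood is needed, and one should take $\delta$ small enough that the graphs $\Sigma_t$ genuinely compete with $\Sigma$ for the area-minimising property — this is where the hypothesis that $\Sigma$ is embedded (not merely immersed) and two-sided is used.
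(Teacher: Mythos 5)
Your proposal is correct and follows essentially the same route as the paper: combine Theorem~\ref{areadec} with the area-minimising hypothesis to force $A(t)\equiv A(0)$, deduce that each $\Sigma_t$ is totally geodesic with constant lapse and vanishing normal Ricci curvature, and then split the metric. The only (inessential) differences are that the paper obtains the rigidity of each slice by observing that every $\Sigma_t$ is itself an area-minimising surface attaining the extremal area bound, so the pointwise rigidity (i)--(iii) from the Introduction applies to it directly, rather than by tracing equality through the proof of Theorem~\ref{areadec}, and that the final splitting is phrased via a parallel unit normal field (hence a Killing field whose flow is the normal exponential map) rather than via Fermi coordinates.
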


% THE PROOFS -----------------------------------------------------

\section{The proofs}
\begin{proof}[Proof of Theorem \ref{areadec}]
With the notation as in Proposition \ref{cmc}, let
\[
f_t(x) := \exp_x(w(x,t)\nu(x)),
\quad x \in \Sigma, \ t \in (-\ep,\ep).
\]
Thus, $f_0 =: f$ is the given totally geodesic embedding.
The \emph{lapse function}
$\rho_t \colon \Sigma \rightarrow \R$ is defined by
\begin{equation}
  \rho_t(x) := \langle \nu_t(x),\frac{\rd}{\rd t}f_t(x) \rangle,
\end{equation}
where $\nu_t$ is a unit normal to $\Sigma_t$,
chosen so as to be continuous in $t$.
It satisfies the following Jacobi equation (cf. \cite{HI} eq. (1.2))
\begin{equation}
  H'(t) = -\Delta_{\Sigma_t}\rho_t -
  (Ric(\nu_t,\nu_t) + \|B_t\|^2)\rho_t,
  \label{Jacobi-eq}
\end{equation}
where $B_t$ is the second fundamental form of $\Sigma_t$ and
$(\cdot)':=\frac{\rd}{\rd t}(\cdot)$.
Since $\rho_0(x) = 1$ for all $x \in \Sigma$, we can assume,
by decreasing $\ep$ if necessary,
that $\rho_t(x) > 0$ for all $x \in \Sigma$.
So we can divide \eqref{Jacobi-eq} by $\rho_t$ and,
on using the Gauss equation
\begin{equation}
    Ric(\nu,\nu) = \tfrac12 R - K + \tfrac12 H^2 - \tfrac12 \|B\|^2,
    \label{Gauss}
\end{equation}
\eqref{Jacobi-eq} becomes
\begin{equation}
  H'(t) \frac{1}{\rho_t} = -\frac{1}{\rho_t}\Delta_{\Sigma_t}\rho_t
  - \tfrac12 R_t + K_t - \tfrac12 H(t)^2 - \tfrac12 \|B_t\|^2
  \label{Jacobi-eq2}
\end{equation}
where $R_t(x) := R(f_t(x))$ and $K_t(x) := K(f_t(x))$
is the Gauss curvature of $\Sigma_t$ at $f_t(x)$.
We now make essential use of the hypotheses (i), (ii) and (iii) which,
via the Gauss equation \eqref{Gauss}, imply that
\begin{equation} \label{K0const}
K_0 \equiv \tfrac12 R_0 \leqslant \tfrac12 R_t(x) \quad
\forall \, x \in \Sigma, \ t \in (-\ep,\ep),
\end{equation}
and therefore, \eqref{Jacobi-eq2} can be rewritten as
\begin{align}\nonumber
 H'(t) \frac{1}{\rho_t} &= -\frac{1}{\rho_t}\Delta_{\Sigma_t}\rho_t
 + \tfrac12(R_0 - R_t) + (K_t - K_0)
 - \tfrac12 H(t)^2 - \tfrac12 \|B_t\|^2 \\
& \leqslant -\frac{1}{\rho_t}\Delta_{\Sigma_t}\rho_t + (K_t - K_0).
 \label{Jacobi-eq3}
\end{align}
We integrate \eqref{Jacobi-eq3} over $\Sigma_t$,
(by parts in the first term on the right) and, keeping in mind that $H$
does not depend on $x$ (hence the importance of using a
constant mean curvature perturbation), we obtain,
\begin{align}\nonumber
 H'(t) \int_{\Sigma}\frac{1}{\rho_t} \, dA_t
 & \leqslant -\int_{\Sigma}\frac{\|\nabla_t\rho_t\|^2}{\rho_t^2} \, dA_t
 + \int_{\Sigma} (K_t - K_0) \, dA_t \\[2\jot]
 & \leqslant  4 \pi(1-\gamma) - K_0 A(t) \,,
 \label{key-inequality}
\end{align}
where $dA_t$ denotes the area element of $\Sigma_t$
with respect to $f_t^*g$ and we have used
the Gauss-Bonnet theorem in the last inequality.

%---------------------------- CLAIM 1 ------------------------------

\bigskip
\noindent\textbf{Claim 1: }There exists
a positive real number $\delta < \ep$
such that $H(t) \leqslant 0$ for all $t\in[0,\delta)$.

\medskip
\begin{proof}[Proof of Claim 1]
There are three cases to consider.

\smallskip
\noindent \emph{Case 1. }$R_0 > 0$.
By scaling, we can arrange $R_0 = 2$.
Then by \eqref{K0const}, we have that $K_0 \equiv 1$,
and $\Sigma$ has genus zero and $A(0) = 4 \pi$.
Therefore inequality \eqref{key-inequality} becomes
\begin{align}\nonumber
 H'(t) \int_{\Sigma} \frac{1}{\rho_t} \, dA_t
 & \leqslant 4 \pi - A(t) = A(0) - A(t) = - \int_0^t A'(s) \, ds \\
 &= - \int_0^t \{ H(s) \int_{\Sigma} \rho_s \, dA_s \} \, ds ,
 \label{key-inequality2}
\end{align}
where, in the last equality, we have used
the first variation of area formula,
\begin{equation}
 A'(t) = \int_{\Sigma} H(t) \rho_t \, dA_t =
 H(t) \int_{\Sigma} \rho_t \, dA_t.
 \label{first-variation}
\end{equation}
Let $\phi(t) := \int_{\Sigma} \frac{1}{\rho_t} \, dA_t$ and
$\xi(t) := \int_{\Sigma} \rho_t \, dA_t$.
Since $\phi$ is strictly positive for all $t \in (-\ep,\ep)$,
inequality \eqref{key-inequality2} becomes
\begin{equation}
  \label{integro-differential}
  H'(t) \leqslant - \frac{1}{\phi(t)} \int_0^t H(s) \xi(s) \, ds.
\end{equation}

As mentioned above, $\rho_0 \equiv 1$ and, by continuity,
we may assume that $\frac12 < \rho_t(x) < 2 \
\forall \, t \in (-\ep,\ep)$ and $x \in \Sigma$.
Integrating over $\Sigma_t$ yields $\frac12 A(t) < \xi(t) < 2A(t)$.
On the other hand, by choosing $\ep > 0$ small enough,
we may assume that $\frac12 A(0) < A(t) < 2A(0)$ and hence,
$\frac14 A(0) < \xi(t) < 4A(0) \ \forall \, t \in (-\ep,\ep)$.
A similar argument holds for $\phi(t)$. In particular we have
\begin{equation} \label{bounds}
 \frac{1}{\phi(t)} < \frac{4}{A(0)} \quad\text{and}\quad
 \xi(t) < 4A(0), \ \forall \, t \in (-\ep,\ep).
\end{equation}

Suppose, for a contradiction, that there exists $t_+ \in (0,\delta)$
such that $H(t_+) > 0$. By continuity, $\exists \ t_- \in [0,t_+)$
such that $H(t_-) \leqslant H(t) \ \forall \, t \in [0,t_+]$.
Note that by \eqref{integro-differential} we must have
$H(t_-) < 0$. By the mean value theorem, $\exists \ t_1 \in (t_-,t_+)$
such that
\[
H'(t_1) = \frac{H(t_+) - H(t_-)}{t_+ - t_-}.
\]
So, by \eqref{integro-differential} and \eqref{bounds}, we have:
\[
\frac{H(t_+) - H(t_-)}{t_+ - t_-} = H'(t_1)
\leqslant - \frac{4}{A(0)} H(t_-) \big(4 A(0)\big) t_1 \,.
\]
It follows that
\[
H(t_+) \leqslant H(t_-) (1 - 16 \delta^2)
\]
which is a contradiction if $0 < \delta < \frac14$ because
$H(t_+) > 0$ and $H(t_-) < 0$.

\smallskip
\noindent \emph{Case 2. }$R_0 = 0$.
By \eqref{K0const}, we have that
$K_0 \equiv 0$ and $\Sigma$ has genus one.
So, inequality \eqref{key-inequality} becomes
$H'(t) \leqslant 0 \ \forall \, t \in [0, \ep)$ and therefore,
since $H(0) = 0$, $H(t) \leqslant 0 \ \forall \, t \in [0, \ep)$.

\smallskip
\noindent
\emph{Case 3. }$R_0 < 0$.
By scaling, we can arrange $R_0 = -2$.
Then by \eqref{K0const}, we have that $K_0 \equiv -1$,
and $\Sigma$ has genus $\gamma > 1$ and $A(0) = 4 \pi (\gamma - 1)$.
Therefore inequality \eqref{key-inequality} becomes
\begin{align}\nonumber
 H'(t) \int_{\Sigma} \frac{1}{\rho_t} \, dA_t
 & \leqslant A(t) - A(0) = \int_0^t A'(s) \, ds \\
 &= \int_0^t \{ H(s) \int_{\Sigma} \rho_s \, dA_s \} \, ds .
 \label{key-inequality3}
\end{align}

Assume, for a contradiction, that there exists $t_0 \in (0,\delta)$
such that $H(t_0) > 0$. Let
\[
 I := \{t \geqslant 0 : t \in [0,t_0], H(t) \geqslant H(t_0) \}.
\]

%---------------------------- CLAIM 2 ------------------------------

\noindent \textbf{Claim 2: } $\inf I=0$.
\begin{proof}[Proof of Claim 2]
Let $t^* := \inf I$ and assume, for a contradiction, that $t^*>0$.
By the mean value theorem, $\exists \ t_1 \in (0,t^*)$ such that
\begin{equation} \label{MVT}
  H(t^*) = H'(t_1)t^*,
\end{equation}
since $H(0)=0$. From \eqref{key-inequality3}, \eqref{bounds}
and \eqref{MVT} we have
\begin{align}
 H(t^*)&\leqslant \frac{t^*}{\phi(t_1)}
 \int_0^{t_1} H(s) \xi(s) \, ds\\ \nonumber
 &\leqslant \frac{t^*}{\phi(t_1)} \int_0^{t_1} H(t^*) \xi(s) \, ds
  \leqslant \frac{4t^*}{A(0)} H(t^*) (4 A(0) t_1) \\
 &< 16 H(t^*) \delta^2
\end{align}
which is a contradiction if $\delta < \frac14$ and
Claim 2 has been proved.
\end{proof}

%--------------------------- END OF CLAIM 2 -----------------------

Since $\inf I = 0$, it follows from the definition of $I$ that
$H(0) \geqslant H(t_0)$ and since, by assumption, $H(t_0)>0$,
we conclude that $H(0) > 0$. This contradicts the hypothesis that
$\Sigma$ is totally geodesic and the proof of Claim 1 is complete.
\end{proof}

%--------------------------- END OF CLAIM 1 -----------------------

We can now easily complete the proof of Theorem \ref{areadec}.
We have that $H(t) \leqslant 0 \ \forall \, t \in [0,\delta)$ and
therefore, \eqref{first-variation} implies that
$A'(t) \leqslant  0$. Hence
$A(t) \leqslant A(0) \ \forall \, t \in [0,\delta)$.
We can argue similarly for $t \in (-\delta, 0]$
to complete the proof of Theorem \ref{areadec}.
\end{proof}

The proof of Theorem \ref{mainthm} now follows easily by
a slight variation of arguments that appear in \cite{BBN} and \cite{N}.

\begin{proof}[Proof of Theorem \ref{mainthm}]
The conclusion of Theorem 2 and
the assumption that $\Sigma$ is area-minimising imply that,
for the CMC family of surfaces $\Sigma_t$
provided by Proposition \ref{cmc},
$A(t) = A(0) \ \forall \, t \in (-\delta,\delta)$.
In particular, each $\Sigma_t$ is area-minimising
and, if $\gamma \neq 1$, the area of each $\Sigma_t$ is
equal to $4 \pi |\gamma - 1|$.
It follows, from (i) in the Introduction that
each $\Sigma_t$ is totally geodesic.
This holds when $\gamma = 1$ as well.
Equation \eqref{Jacobi-eq} then tells us that
the lapse function $\rho_t$ is harmonic,
and therefore is constant on $\Sigma_t$,
i.e. $\rho_t$ is a function of $t$ only.

%---------------------------- CLAIM 3 ------------------------------

\noindent \textbf{Claim 3: } The vector field $\nu_t$ is parallel.
\begin{proof}[Proof of Claim 3] $\Sigma_t$ is totally geodesic and
therefore, $\nabla_{\frac{\rd f_t}{\rd x^i}}\nu_t = 0$.
\begin{align}\nonumber
  0 = \frac{\rd}{\rd x^i}\rho_t &= \langle
    \nabla_{\frac{\rd f_t}{\rd x^i}}\nu_t,\frac{\rd f_t}{\rd t} \rangle
    + \langle
    \nu_t,\nabla_{\frac{\rd f_t}{\rd x^i}} \frac{\rd f_t}{\rd t}
    \rangle\\ \nonumber
  & = \langle
    \nu_t,\nabla_{\frac{\rd f_t}{\rd x^i}},\frac{\rd f_t}{\rd t} \rangle
    \qquad (\Sigma_t \text{ is totally geodesic})\\ \nonumber
  & = \frac{\rd}{\rd t}\langle\nu_t,\frac{\rd f_t}{\rd x^i} \rangle
    - \langle\nabla_{\frac{\rd f_t}{\rd t}}\nu_t,\frac{\rd f_t}{\rd x^i}
    \rangle\\ \nonumber
  & = -\langle
    \nabla_{\frac{\rd f_t}{\rd t}}\nu_t, \frac{\rd f_t}{\rd x^i} \rangle.
\end{align}
Hence $\nabla_{\frac{\rd f_t}{\rd t}}\nu_t = 0$.
This, together with the fact that $\Sigma_t$ is totally geodesic,
implies that the vector field $\nu_t$ is parallel.
\end{proof}

%--------------------------- END OF CLAIM 3 -----------------------

It follows that the integral curves of $\nu_t$ are geodesics and
that the flow $\Phi$ of $\nu_t$ is just the exponential map,
i.e. $\Phi(t,x) = \exp_x(t \nu(x)) \ \forall \, x \in \Sigma$.
Furthermore, since $\nu_t$ is, in particular, a Killing field,
this exponential map $\exp_{( \cdot )}(t \nu(\cdot))$ is an isometry
for all $t \in (-\delta,\delta)$. In other words, if $g_{\Sigma}$ is
the restriction of $g$ to $\Sigma$ then the exponential map of
the $\delta$-neighbourhood $\Sigma \times (-\delta,\delta)$
of the zero section of the normal bundle of $\Sigma$ in $M$
with the metric $g_{\Sigma} + dt^2$ is an isometry onto its image.
\end{proof}

\begin{rem}
It is straightforward to show that $w(x,t) \equiv t$ and that
$\rho_t \equiv 1$. Let
$S_t := \{ \exp_x(t \nu(x)) : x \in \Sigma \}$.
Pick $t_0 \in (-\delta,\delta)$ and $x_0 \in \Sigma$
and set $w_0 := w(x_0,t_0)$. Then $\Sigma_{t_0}$ and
$S_{w_0}$ are both totally geodesic and touch at
$\exp_{x_0}(t_0 \nu(x_0))$. Therefore, they coincide.
In particular $w$ is a function of $t$ only and, since by
Proposition \ref{cmc}, $\int_{\Sigma} (w(\cdot,t) - t) \, dA = 0$,
we have $w(x,t) \equiv t$. It follows that
$\nu_t(x) = \frac{\rd}{\rd t}f_t(x)$ and that $\rho_t \equiv 1$.
\end{rem}

\section{Acknowledgements}
This research was partially supported by a
Warwick Postgraduate Research Scholarship (WPRS).

%----------------------------- bibl ------------------------------

\bibliographystyle{plain}

\end{document}